\documentclass[fontsize=11pt,parskip=full]{scrartcl}
\usepackage{amsmath,amssymb,amsfonts,amsthm,mathabx}
\usepackage[dvips]{graphics}
\usepackage{enumerate}
\usepackage{mathrsfs}
\usepackage{bbm}
\usepackage{subfigure}
\usepackage{color}

\usepackage[T1]{fontenc}

\DeclareSymbolFont{AMSb}{U}{msb}{m}{n}

\oddsidemargin  -0.3cm
\textwidth      16cm
% \headheight     0.0in
% \topmargin      0.0inx
% \textheight=9.0in

% \newcommand{\annsection}[1]{\stepcounter{section} \noindent {\bf
% \thesection. #1. }}
% \newcommand{\annsubsection}[1]{\stepcounter{subsection} \noindent{\it
% \thesubsection. #1. }} 

\def\B{\mathcal {B}}

\def\C{\mathcal {C}}

\def\P{\mathbb P}
\def\N{\mathbb N}

\def\L{\mathcal {L}}
\def\M{\mathcal {M}}

\def\R{\mathbb {R}}

\def\dd{\mathrm {d}}

\def\EE{\mathcal {E}}

\def\FF{\mathscr{F}}

\def\E{\mathbb {E}}

\def\1{\mathbbm{1}}

\def\scirc{\mathbin{\raise.15ex\hbox{\scriptsize$\circ$}}}

\newtheorem{thm}{Theorem}[section]
\newtheorem{prop}[thm]{Proposition}
\newtheorem{cor}[thm]{Corollary}

\theoremstyle{definition}

\newtheorem{remark}[thm]{Remark}

\newtheorem{ex}[thm]{Example}

\title{Couplings, generalized couplings and uniqueness of invariant measures}

\author{Michael Scheutzow%
  \thanks{Institut f\"ur Mathematik, MA 7-5, Fakult\"at II, 
    Technische Universit\"at Berlin, 
    Stra\ss e des 17.~Juni 136, 10623 Berlin, FRG;  \ 
    \small\texttt{ms{\scriptsize @}math.tu-berlin.de}}
}

\date{}

\begin{document}\maketitle

\begin{abstract}\noindent
  We provide sufficient conditions for uniqueness of an invariant probability measure of a Markov kernel in terms of (generalized) couplings.
  Our main theorem generalizes previous results which require the state space to be Polish. We provide an example showing that uniqueness can fail
  if the state space is separable and metric (but not Polish) even though a  coupling defined via a continuous and positive definite function exists.

\par\medskip

  \noindent\footnotesize
  \emph{2020 Mathematics Subject Classification} 
  Primary\, 60J05   % Random operators and equations
  \ Secondary\, 60G10 
%37B25 \ % Lyapunov functions and stability; attractors, repellers 
% 37B55 \ % Nonautonomous dynamical systems
% 37C70 \ % Attractors and repellers, topological structure 
% 37G35 \ % Attractors and their bifurcations 
%37H99 \ % Random dynamical systems: 
%        % `none of the above, but in this section' 
%37L55 \ % Infinite-dimensional random dynamical systems; 
        % stochastic equations
%          60D05 \ % Geometric probability, stochastic geometry, random sets
% 60G57 \ % Random measures
% 60H10 \ % Stochastic ordinary differential equations
% 60H15 \ % Stochastic partial differential equations 
% 60H25   % Random operators and equations
% 93E03
\end{abstract}

\noindent{\slshape\bfseries Keywords.} Markov chain; invariant measure; coupling; generalized coupling.

\section{Introduction}
One important question in the theory of Markov processes is that of existence and uniqueness of invariant probability measures (ipms).
In this note we will concentrate on uniqueness. A sufficient condition for uniqueness of an ipm is provided by {\em Doob's theorem} based on 
appropriate equivalence assumptions of the transition probabilities. In fact, such kind of conditions even imply total variation convergence of all or almost all transition probabilities (see \cite{KS15}, \cite{SS20}). On the other hand, there are a number of cases in which an ipm is known to be unique and
for which it is also known that equivalence of transition probabilities fails, for example certain classes of stochastic functional differential equations, see e.g.~\cite{HMS11}.
In \cite[Theorem 1.1, Corollary 2.2]{HMS11} and later in \cite[Theorem 1, Corollary 1]{KS18}, the authors provided uniqueness
criteria in terms of {\em generalized couplings}. A basic assumption in both papers is that the state space is  {\em Polish} (i.e.~a separable
and completely metrizable topological space), a fact which is used in order to apply an ergodic decomposition theorem but also to guarantee inner regularity
of finite Borel measures. In recent years, there seems to be growing interest in invariant measures for Markov processes with non-Polish state space, like spaces of bounded measurable
functions (e.g.~\cite{BF20}).

In this note, we generalize previous results to (not necessarily separable) metric spaces and, in the Polish state space case, we allow that the distance function which appears in the coupling assumption, is a lower semi-continuous positive definite function and not necessarily a metric. We also provide an example
showing that this generalization fails to hold if the state space is separable and metric but not complete.

Let us briefly recall the previous approaches to show uniqueness via generalized couplings in the case of a Polish state space. Assume that a Markov kernel $P$ admits more than
one ipm. Then it is known that $P$ admits two distinct ergodic and hence mutually singular ipms $\mu$ and $\nu$ (see, e.g., \cite{H08}). Therefore, there  exist disjoint compact sets
$A$ and $B$ of $\mu$(resp.~$\nu$)-measure almost 1. By ergodicity, starting in $A$, the Markov chain will almost surely spend a large proportion of time in $A$ and similarly for $B$.
No matter how we couple the chains starting in $A$ and in $B$: most of the time, the first process is in $A$ and the second one is in $B$ and so their distance is at least
equal to the distance of the sets $A$ and $B$ (which is strictly positive), thus contradicting the usual {\em coupling} assumption that there exists a coupling for which the
processes starting in $A$ and in $B$ are very close for large times. This argument still holds if couplings are replaced by generalized couplings (see the definition below).  

The note is organized as follows. In the following section, we provide three elementary propositions, where the first and the third one constitute an elementary
substitute for the ergodic decomposition property which does not seem to be known for a  general state space. Then we present and prove the main result along the lines
\cite{HMS11} and  \cite{KS18} but using these propositions instead of ergodicity and inner regularity in the Polish case.  

\section{Preliminaries}

Let $P$ be a Markov kernel on the measurable space $(E,\EE)$. We denote the set of probability measures on $(E,\EE)$ by $\M_1(E,\EE)$ or just
$\M_1(E)$. If $\mu \in \M_1(E)$, then we write $\mu P$
for the image of $\mu$ under $P$. We are interested in providing criteria for the uniqueness of an invariant probability measure (ipm), i.e.~a probability measure
$\pi$ on $(E,\EE)$ satisfying $\pi P=\pi$.
%We say that a set $A \in \EE$ is {\em invariant} (under $P$) if $P(x,A)=1$ for all $x \in A$.
We call two probability measures $\mu$ and $\nu$ on the measurable space $(E,\EE)$ {\em (mutually) singular}, denoted $\mu \perp \nu$, if
there exists a set $C \in \EE$ such that $\mu(C)=1$ and $\nu(C)=0$. As usual, $\mu \ll \nu$ means that the measure $\mu$ is absolutely
continuous with respect to $\nu$. If $E$ is a topological space, then we denote its Borel $\sigma$-field by $\B(E)$.

For $x \in E$, we denote the law of the chain with kernel $P$ and initial condition $x$ by $\P_x$. Note that $\P_x$ is a probability measure on the space
$\big(E^{\N_0},\EE^{\N_0}\big)$. $\C(\P_x,\P_y):=\big\{ \xi \in \M_1(E^{\N_0}\times  E^{\N_0}): \, \pi_1(\xi)=\P_x,\,\pi_2(\xi)=\P_y \big\}$ is called the set
of {\em couplings} of $\P_x$ and $\P_y$. Here, $\pi_i(\xi)$ denotes the image of  $\xi$ under the projection on the $i$-th coordinate, $i=1,2$. 
The set of {\em generalized couplings} $\hat \C (\P_x,\P_y)$ is defined as
$$
\hat \C(\P_x,\P_y):=\big\{ \xi \in \M_1(E^{\N_0}\times  E^{\N_0}): \, \pi_1(\xi)\ll\P_x,\,\pi_2(\xi)\ll\P_y \big\}. 
$$

The following elementary proposition is a consequence of the ergodic decomposition theorem   under the assumption that the space $(E,\EE)$ is standard Borel, i.e.~measurable isomorphic to a Polish space
equipped with its Borel $\sigma$-field, but we are not aware of a proof in the general case. 
\begin{prop}\label{singu}
Assume that $P$ admits more than one ipm. Then there exist two mutually singular ipm's.
\end{prop}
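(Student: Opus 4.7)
My plan is to construct two mutually singular ipms directly from any given pair $\mu\neq\nu$ of distinct ipms, thereby avoiding ergodic decomposition entirely by working in $L^1$ of the reference measure $\rho := \tfrac12(\mu+\nu)$. Since $\mu P=\mu$ and $\nu P=\nu$, linearity forces $\rho P=\rho$, and both $\mu$ and $\nu$ are dominated by $\rho$ with densities $f_\mu, f_\nu \in L^1(\rho)$. The crux will be a positivity trick applied to a pre-dual Markov operator, producing two fixed points with disjoint supports.

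Concretely, I would define $T^\ast : L^1(\rho) \to L^1(\rho)$ by $T^\ast f := d((f\rho)P)/d\rho$; this is well defined because $\rho(A)=0$ implies $P(x,A)=0$ for $\rho$-a.e.\ $x$ (using $\rho P = \rho$), so $(f\rho)P \ll \rho$ for every $f \in L^1(\rho)$. The operator $T^\ast$ is linear, positive, and integral-preserving, and invariance of $\mu,\nu$ translates to the fixed-point equations $T^\ast f_\mu = f_\mu$ and $T^\ast f_\nu = f_\nu$. Thus $h := f_\mu - f_\nu$ is a non-zero, zero-mean fixed point of $T^\ast$ in $L^1(\rho)$.

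The second step is a standard positivity argument: $h^+ \geq h$ and positivity of $T^\ast$ give $T^\ast h^+ \geq T^\ast h = h$; combined with $T^\ast h^+ \geq 0$ this yields $T^\ast h^+ \geq h^+$, and the identity $\int T^\ast h^+\,d\rho = \int h^+\,d\rho$ then forces $T^\ast h^+ = h^+$ $\rho$-a.e. Symmetrically $T^\ast h^- = h^-$. Since $h \not\equiv 0$ has zero integral, neither $h^+$ nor $h^-$ vanishes, and normalizing produces densities of two ipms supported in the disjoint sets $\{h>0\}$ and $\{h<0\}$, which are therefore mutually singular. The only step that feels delicate in the general measurable-space setting is the well-definedness of $T^\ast$ on $L^1(\rho)$; it relies only on the Radon--Nikodym theorem for finite measures and on the a.e.\ implication $\rho(A)=0 \Rightarrow P(\cdot,A)=0$ $\rho$-a.e., neither of which needs any Polish or standard Borel assumption, so no topological hypothesis on $(E,\EE)$ enters.
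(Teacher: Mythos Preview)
Your proof is correct and takes a genuinely different route from the paper. The paper argues by cases: if $\mu$ and $\nu$ are equivalent it sets $f=\dd\mu/\dd\nu$, $A=\{f>1\}$, and uses a direct ``flux balance'' identity $\int_A P(y,A^c)\,\dd\nu=\int_{A^c}P(y,A)\,\dd\nu$ (and the same with weight $f$) to force $P(y,A^c)=0$ for a.e.\ $y\in A$, so that the normalized restrictions $\mu|_A$ and $\mu|_{A^c}$ are singular ipms; the non-equivalent, non-singular case is reduced to this via a Lebesgue decomposition. Your argument instead passes to the single reference measure $\rho=\tfrac12(\mu+\nu)$, works with the positive integral-preserving operator $T^\ast$ on $L^1(\rho)$, and uses the lattice trick $T^\ast h^+\ge h^+$ together with $\int T^\ast h^+ = \int h^+$ to split the fixed point $h=f_\mu-f_\nu$ into $h^\pm$. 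This is cleaner in that it avoids the case distinction entirely and isolates the only measure-theoretic input (Radon--Nikodym plus the a.e.\ null-propagation $\rho(A)=0\Rightarrow P(\cdot,A)=0$ $\rho$-a.e.) in the well-definedness of $T^\ast$. The paper's approach, on the other hand, is slightly more concrete about the kernel: it exhibits an invariant set $A$ with $P(y,A^c)=0$ a.e.\ on $A$, which is marginally more information than just producing two singular ipms. Both arguments are fully measure-theoretic and need no topological hypothesis on $(E,\EE)$.
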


\begin{proof}
  Let $\mu$ and $\nu$ be two distinct ipm's. Assume first that $\mu$ and $\nu$ are mutually equivalent and define $f(x)=\frac{\dd \mu}{\dd \nu}(x)$, $x \in E$ and
  $A:=\{x\in E: \,f(x)>1\}$. Then $\mu(A),\nu(A) \in (0,1)$. %For each $n \in \N$
We have (by invariance of $\nu$ and $\mu$)
  $$
\int_A P(y,A^c)\,\dd \nu(y)=\int_{A^c} P(y,A)\,\dd \nu(y)
$$
and
  $$
\int_A P(y,A^c)f(y)\,\dd \nu(y)=\int_{A^c} P(y,A)f(y)\,\dd \nu(y).
$$
Since $f(y)>1$ on $A$ and $f(y) \le 1$ on $A^c$, it follows that all four expressions in the two equations are in fact equal.
This implies $P(y, A^c)=0$ for ($\mu$ or $\nu$)-almost all $y \in A$ and hence $P(y,A)=0$ for almost all $y\in A^c$. %so the set $A$ is invariant (under both $\mu$ and $\nu$) and hence $A^c$ is also invariant.
Therefore, the probability measures $\frac 1{\mu(A)} \mu|_A$ and $\frac 1{\mu(A^c)} \mu|_{A^c}$ are mutually singular ipm's.\\

Let us now assume that $\mu$ and $\nu$ are distinct ipm's which are neither equivalent nor singular. Without loss of generality we assume that $\mu$ is not absolutely continuous with respect to $\nu$. Then there exist disjoint sets $A,B,C \in \EE$ such that $A \cup B \cup C=E$ and $\mu$ and $\nu$ restricted to $B$ are equivalent, $\nu(A)=0$ and $\mu(C)=0$.
By assumption, $\mu(A)>0$ and $\mu(B)$, $\nu(B)>0$. Then $P(x,B)=1$ for ($\mu$ or $\nu$-)almost all $x \in B$ showing that the normalized measures $\mu$ restricted to $B$ and to $A$
are mutually singular invariant probability measures.
\end{proof}

If $E$ is a non-empty set, $A$ and $B$ are subsets of $E$ and $\rho:E \times E \to [0,\infty)$, then we define
$$
\rho(A,B):=\inf\{\rho(a,b):\,a \in A,\,b\in B\},
$$
where the infimum over the empty set is defined as $+\infty$.
If $A=\{x\}$, then we write $\rho(x,B)$ instead of $\rho(\{x\},B)$. Further,
we call such a function
$\rho$ {\em positive definite}, if $\rho(x,y)=0$ iff $x = y$.

\begin{prop}\label{closed}
  Let $\mu$ and $\nu$ be probability measures on the Borel sets of a metric space $(E,d)$ such that $\mu \perp \nu$. Let $C \in \B(E)$ be such that $\mu(C)=1$ and $\nu(C)=0$.
  Then, for every $\varepsilon >0$, there exist closed sets $A\subset C$ and $B\subset C^c$ such that $\mu(A)>1-\varepsilon$, $\nu(B)>1-\varepsilon$  and $d(A,B)>0$.

  If $E$ is Polish and $d$ is a (not necessarily complete) metric which generates the topology of $E$, then, in addition, $A$ and $B$ can be chosen to be compact.
  In this case, it holds that for any lower semi-continuous and positive definite function $\rho:E \times E \to [0,\infty)$, we have $\rho(A,B)>0$.
\end{prop}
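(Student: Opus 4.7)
The plan is to exploit inner regularity of finite Borel measures. On any metric space, every finite Borel measure is inner regular by closed sets (a standard $\sigma$-algebra argument, using that each closed $F$ equals $\bigcap_n \{x : d(x,F) < 1/n\}$); on a Polish space Ulam's theorem upgrades this to inner regularity by compact sets. The naive attempt of picking closed $A \subset C$ and closed $B \subset C^c$ of $\mu$- and $\nu$-mass exceeding $1-\varepsilon$ fails, since two disjoint closed sets in a metric space can have distance zero (for example the $x$-axis and a hyperbola asymptotic to it in $\R^2$). This is the main obstacle, and I would resolve it by thickening $A$ to an open neighbourhood before carving $B$ out of $C^c$.

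Concretely: use inner regularity of $\mu$ to choose closed $A \subset C$ with $\mu(A) > 1-\varepsilon$, compact in the Polish case. Set $A_\eta := \{x \in E : d(x,A) < \eta\}$; since $A$ is closed, $A_\eta \downarrow A$ as $\eta \downarrow 0$, and continuity of measure together with $A \subset C$ and $\nu(C) = 0$ gives $\nu(A_\eta) \downarrow 0$. Pick $\eta > 0$ with $\nu(A_\eta) < \varepsilon/2$. Use inner regularity of $\nu$ to choose closed (respectively compact) $B_0 \subset C^c$ with $\nu(B_0) > 1 - \varepsilon/2$, and set $B := B_0 \setminus A_\eta$. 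Then $B$ is closed and contained in $C^c$, with $\nu(B) \ge \nu(B_0) - \nu(A_\eta) > 1 - \varepsilon$ and $d(A,B) \ge \eta > 0$; in the Polish case $B$ is a closed subset of the compact set $B_0$ and is therefore compact as well. Here I also use that $d$ generates the Polish topology, so compactness and closedness do not depend on which compatible metric one picks.

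For the final assertion about a lower semi-continuous positive definite $\rho$: with $A$ and $B$ compact, $A \times B$ is compact in $E \times E$, and a lower semi-continuous function on a nonempty compact set attains its infimum, so $\rho(A,B) = \rho(a^*, b^*)$ for some $(a^*, b^*) \in A \times B$. Because $A \subset C$ and $B \subset C^c$ are disjoint, $a^* \ne b^*$, and positive definiteness of $\rho$ yields $\rho(A,B) > 0$.
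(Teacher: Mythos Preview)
Your argument is correct and follows essentially the same route as the paper: pick a closed (resp.\ compact) $A\subset C$ by inner regularity, pick a closed (resp.\ compact) $B_0\subset C^c$ with a little room to spare, and then intersect $B_0$ with the complement of an open $\eta$-neighbourhood of $A$ to force $d(A,B)\ge\eta$; your $B_0\setminus A_\eta$ is exactly the paper's $B_0\cap\{y:d(y,A)\ge 1/n\}$. The only cosmetic difference is that in the Polish case the paper skips the neighbourhood-removal step (disjoint compact sets already have positive $d$-distance), whereas you run the same construction uniformly and then note that $B$, being closed in the compact $B_0$, is compact.
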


\begin{proof}
  By \cite[Lemma 7.2.4.]{C13},
  there exists a closed set $A\subseteq C$ such that $\mu(A)> 1-\varepsilon$. Similarly, there is a closed set $B_0 \subset C^c$ for which
  $\nu(B_0)> 1-\varepsilon/2$. For $n \in \N$, let $B_n:=\{y \in E:\,d(y,A)\ge 1/n\}$. Choose $n\in \N$ such that $\nu(B_n\cap B_0)> 1-\varepsilon$. Then
  $A$ and $B:=B_n\cap B_0$ satisfy all properties stated in the proposition (and $d(A,B)\ge 1/n$).

  On a Polish space, every finite measure on the Borel sets is  {\em regular} (\cite[Proposition 8.1.12]{C13}) and therefore, there exist compact sets
  $A\subset C$ and $B \subset C^c$ such that $\mu(A)> 1-\varepsilon$ and $\mu(B)> 1-\varepsilon$. Since $A$ and $B$ are disjoint, we have
  $d(A,B)>0$. Moreover, if $\rho:E \times E \to [0,\infty)$ is lower semi-continuous and positive definite, then, automatically, $\rho(A,B)>0$
  by compactness of $A$ and $B$.
\end{proof}

\begin{prop}\label{psi}
  Let $\mu$ be an invariant probability measure of the Markov kernel  $P$ on the measurable space $(E,\EE)$ and let $f: E \to \R$ be bounded and measurable. For $\gamma \in \R$
  define $\psi_\gamma:E \to [0,1]$ by
  \begin{equation}\label{psigamma}
  \psi_\gamma(x)=\P_x\Big( \liminf_{n \to \infty} \frac 1n \sum_{i=0}^{n-1}f(X_i) \ge \gamma\Big).
  \end{equation}
  Then, $\psi_\gamma(x)\in \{0,1\}$ for $\mu$-almost all $x \in E$.

If, moreover,  $f(x)\in [0,1]$ for all $x \in E$, $m:=\int f\,\dd \mu$, and $\gamma \in [0,m]$, then 
  $$
\mu\big(\big\{x:\,\psi_\gamma(x)=1\big\}\big)\ge 1-\frac {1-m}{1-\gamma}.
  $$
\end{prop}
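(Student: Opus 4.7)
The plan is to first establish the $\{0,1\}$-dichotomy for $\psi_\gamma$ via harmonicity plus a martingale convergence, and then derive the quantitative lower bound from Birkhoff's pointwise ergodic theorem applied to the stationary path measure.

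For the dichotomy, set $A:=\{\liminf_n \tfrac{1}{n}\sum_{i=0}^{n-1} f(X_i)\ge\gamma\}\subset E^{\N_0}$ and let $\theta$ denote the canonical shift. Because $f$ is bounded, $\tfrac{f(X_n)-f(X_0)}{n}\to 0$, so $\liminf_n \tfrac{1}{n}\sum_{i=1}^{n} f(X_i)=\liminf_n \tfrac{1}{n}\sum_{i=0}^{n-1} f(X_i)$ for every path, giving $\theta^{-n}A=A$ for all $n\ge 0$. Writing $(\F_n)$ for the natural filtration on $E^{\N_0}$, the Markov property yields
$$
\E_x[\1_A\mid \F_n]=\E_x[\1_A\circ\theta^n\mid\F_n]=\P_{X_n}(A)=\psi_\gamma(X_n),
$$
so $(\psi_\gamma(X_n))_n$ is a $[0,1]$-valued martingale under each $\P_x$ which converges to $\1_A$ by L\'evy's upward $0$-$1$ law. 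Now set $\P_\mu:=\int \P_x\,d\mu(x)$: under $\P_\mu$ each $\psi_\gamma(X_n)$ has the same law as $\psi_\gamma(X_0)$ by invariance of $\mu$, so this common law must coincide with that of the $\P_\mu$-a.s.\ limit $\1_A$, which is concentrated on $\{0,1\}$. Therefore $\psi_\gamma\in\{0,1\}$ $\mu$-a.s.

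For the quantitative bound, $\theta$ preserves $\P_\mu$ by invariance of $\mu$ under $P$, so Birkhoff's ergodic theorem — valid for any abstract measure-preserving transformation and requiring no topological hypothesis on $(E,\EE)$ — produces an a.s.\ limit $Y:=\lim_n \tfrac{1}{n}\sum_{i=0}^{n-1} f(X_i)\in[0,1]$ with $\E_\mu[Y]=m$ and $\{Y\ge\gamma\}=A$ $\P_\mu$-a.s. A Markov-type inequality then gives
$$
m=\E_\mu[Y]\le \gamma\,\P_\mu(Y<\gamma)+\P_\mu(Y\ge\gamma)=\gamma+(1-\gamma)\,\P_\mu(Y\ge\gamma),
$$
so $\P_\mu(Y\ge\gamma)\ge 1-\tfrac{1-m}{1-\gamma}$. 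By the dichotomy just established, $\mu(\psi_\gamma=1)=\int\psi_\gamma\,d\mu=\P_\mu(A)=\P_\mu(Y\ge\gamma)$, which is the desired bound.

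The main point to watch is that the tools used remain valid with no separability or completeness assumption on $(E,\EE)$: both L\'evy's $0$-$1$ law and Birkhoff's theorem hold on arbitrary probability spaces, so this is fine. The single technically delicate step is the identification $\E_x[\1_A\mid\F_n]=\psi_\gamma(X_n)$, which depends on $A$ being \emph{exactly} (not just $\P_\mu$-a.s.) shift invariant, and this is precisely where the boundedness of $f$ enters.
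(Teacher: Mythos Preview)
Your proof is correct and follows essentially the same route as the paper: for the dichotomy you use that $(\psi_\gamma(X_n))_n$ is a bounded martingale under $\P_\mu$ converging to $\1_A$, combined with stationarity, and for the bound you apply Birkhoff's theorem plus a Markov-type inequality to $1-Y$. The only cosmetic difference is that the paper phrases the first step as ``a stationary bounded martingale is a.s.\ constant,'' whereas you argue via equality of the law of $\psi_\gamma(X_0)$ with that of the a.s.\ limit $\1_A$; your added justification of exact shift-invariance of $A$ makes explicit a point the paper leaves implicit.
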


\begin{proof}
  Let $X_0,\,X_1,...$ be the Markov chain started with $\L(X_0)=\mu$ defined on a space $(\Omega,\FF,\P)$. Then $\psi_\gamma(X_n),\,n \in \N_0$ is a stationary process and a (bounded) martingale with respect to
  the complete filtration $(\FF_n)$ generated by $(X_n)$, so
  $Z:=\lim_{n \to \infty}\psi_\gamma(X_n)$ exists almost surely by the martingale convergence theorem. Stationarity  implies that $n \mapsto \psi_\gamma(X_n)$ is almost surely constant. Further, $Z$ is $\FF_\infty$-measurable
  and therefore $Z \in \{0,1\}$ almost surely. Hence, $\psi_\gamma(x) \in \{0,1\}$ for $\mu$-almost all $x \in E$.\\

  To establish the final statement, we apply Birkhoff's ergodic theorem to see that
  $$
Y:=\lim_{n \to \infty}\frac 1n \sum_{i=0}^{n-1}f(X_i)
$$
exists almost surely and $\E Y=m$. Therefore, by Markov's inequality,
$$
\mu\big(\big\{x:\,\psi_\gamma(x)=1\big\}\big)=\P (Y\ge \gamma)=1-\P (1-Y >1-\gamma)\ge 1- \frac {1-m}{1-\gamma},
$$
so the proof is complete.
\end{proof}

\begin{remark}
  Note that, due to Birkhoff's ergodic theorem, we could replace the $\liminf$ in \eqref{psigamma} by $\limsup$ or $\lim$. This changes the value of $\psi_\gamma$ only on a set
  of $\mu$-measure 0. %We will do this below.
  \end{remark}

  \section{Main result}

  Before we state the main result we address a small technical issue. If the metric space $(E,d)$ is not separable, then it may happen that the map $(x,y)\mapsto d(x,y)$ is not
  $\B(E)\otimes\B(E)$-measurable (the map is of course $B(E\times E)$-measurable but  $\B(E)\otimes\B(E)$ may be strictly contained in $\B(E\times E)$). If $\xi$ is a probability measure on
  $(E \times E,\EE \otimes \EE)$, then we silently assume that an expression like $\xi(A)$ is interpreted as $\xi^*(A)$ in case $A$ is not measurable where $\xi^*$ denotes the
  outer measure associated to $\xi$.
  
  \begin{thm}\label{main}
    Let $\mu_1$ and $\mu_2$ be invariant probability measures of the Markov kernel $P$ on the metric space $(E,d)$ with Borel $\sigma$-field
    $\EE:=\B(E)$. Assume that there exists a set $M \in \EE \otimes \EE$ such that $\mu_1 \otimes \mu_2(M)>0$ and that for every $(x,y) \in M$
    there exists some $\alpha_{x,y}>0 $ such that for
    every $\varepsilon>0$ there exists some $\xi_{x,y}^\varepsilon \in \hat \C(\P_x,\P_y)$ such that
    \begin{equation}\label{formula}
 \xi_{x,y}^\varepsilon\Big( (\xi,\eta)\in E^{\N_0}\times E^{\N_0}:\,\limsup_{n \to \infty} \frac 1n\sum_{i=0}^{n-1} \1_{[0,\varepsilon]}\big(d(\xi_i,\eta_i)\ge \alpha_{x,y}\big)>0.
    \end{equation}  
    % \begin{equation}\label{formula}
%\limsup_{n \to \infty} \frac 1n\sum_{i=0}^{n-1} \xi_{x,y}^\varepsilon\big(d(X_i,Y_i)\leq \varepsilon\big)\geq   \alpha_{x,y}.
%\end{equation}
Then $\mu_1$ and $\mu_2$ cannot be mutually singular.

If, moreover, $E$ is Polish %(where $d$ is a not necessarily complete metric which  generates the topology of $E$)
and $\rho:E \times E \to [0,\infty)$ is a lower semicontinuous and positive definite function for which
\eqref{formula} holds for $d$ replaced by $\rho$ then, again, $\mu_1$ and $\mu_2$ cannot be mutually singular.
    \end{thm}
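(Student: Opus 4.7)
The plan is to argue by contradiction, following the same arc as the first part of the theorem but replacing $d$ by $\rho$ and using the \emph{Polish} part of Proposition~\ref{closed} so that one obtains \emph{compact} good sets $A,B$ on which $\rho(A,B)>0$. So assume $\mu_1 \perp \mu_2$, and fix $C \in \EE$ with $\mu_1(C)=1$, $\mu_2(C)=0$. The five steps are: (i) pass to a measurable subset of $M$ of positive measure on which $\alpha_{x,y}$ is uniformly bounded below; (ii) use Proposition~\ref{closed} to pick compact $A\subset C$, $B\subset C^c$ with $\mu_1(A), \mu_2(B)$ close to $1$ and $\rho(A,B)>0$; (iii) use Proposition~\ref{psi} to isolate sets $G_A, G_B$ of ``good'' starting points from which the chain spends asymptotically most of its time in $A$ (resp.~$B$); (iv) pick a pair $(x_0,y_0)\in M\cap(G_A\times G_B)$; (v) contradict the generalized coupling hypothesis.

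For step (i), write $M=\bigcup_k\{(x,y)\in M:\alpha_{x,y}\ge 1/k\}$ and, by countable exhaustion, pass to a measurable subset of $M$ (again called $M$) of positive product measure on which $\alpha_{x,y}\ge\alpha_0$ for some fixed $\alpha_0>0$; set $\beta:=\mu_1\otimes\mu_2(M)>0$. Fix $\gamma\in(1-\alpha_0/2,1)$, then choose $\varepsilon'>0$ so small that $2\varepsilon'/(1-\gamma)<\beta$. The Polish version of Proposition~\ref{closed} now yields compact $A\subset C$, $B\subset C^c$ with $\mu_1(A),\mu_2(B)>1-\varepsilon'$ and $\rho(A,B)>0$. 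Applying Proposition~\ref{psi} to $(\mu_1,\1_A)$ and to $(\mu_2,\1_B)$ produces measurable sets $G_A,G_B\in\EE$ with $\mu_1(G_A^c),\mu_2(G_B^c)\le\varepsilon'/(1-\gamma)$ on which $\P_x(\liminf_n\frac 1n\sum_{i=0}^{n-1}\1_A(X_i)\ge\gamma)=1$ respectively $\P_y(\liminf_n\frac 1n\sum_{i=0}^{n-1}\1_B(X_i)\ge\gamma)=1$. By the choice of $\varepsilon'$, $M\cap(G_A\times G_B)$ has positive $\mu_1\otimes\mu_2$-measure, so pick $(x_0,y_0)$ in it.

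Finally, pick $\varepsilon\in(0,\rho(A,B))$ and let $\xi^\varepsilon\in\hat\C(\P_{x_0},\P_{y_0})$ be the generalized coupling supplied by the hypothesis, so that
\[
\xi^\varepsilon\Big(\Big\{(\xi,\eta):\limsup_{n\to\infty}\tfrac 1n\sum_{i=0}^{n-1}\1_{[0,\varepsilon]}(\rho(\xi_i,\eta_i))\ge\alpha_{x_0,y_0}\Big\}\Big)>0.
\]
Because $\pi_1(\xi^\varepsilon)\ll\P_{x_0}$ and $x_0\in G_A$, the first-coordinate event $\{\liminf\frac 1n\sum\1_A(\xi_i)\ge\gamma\}$ has $\xi^\varepsilon$-probability $1$; symmetrically for $\{\liminf\frac 1n\sum\1_B(\eta_i)\ge\gamma\}$. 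Combining gives $\liminf\frac 1n\sum\1_{A\times B}(\xi_i,\eta_i)\ge 2\gamma-1$ $\xi^\varepsilon$-a.s. But $(\xi_i,\eta_i)\in A\times B$ forces $\rho(\xi_i,\eta_i)\ge\rho(A,B)>\varepsilon$, so $\1_{[0,\varepsilon]}(\rho(\xi_i,\eta_i))\le 1-\1_{A\times B}(\xi_i,\eta_i)$, and the limsup in the displayed event is $\xi^\varepsilon$-a.s.\ bounded by $2(1-\gamma)<\alpha_0\le\alpha_{x_0,y_0}$, contradicting the strict positivity above.

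\textbf{Anticipated obstacle.} The only delicate point is the order of choices: one must first extract the uniform lower bound $\alpha_0$ (so that $\gamma$ can be selected \emph{before} $A,B$), then pick $\varepsilon'$ small enough that $G_A\times G_B$ still intersects $M$, and only afterwards realize compact $A,B$ (which then fixes $\rho(A,B)$ and hence admissible $\varepsilon$). The step that lets a generalized coupling substitute for an ordinary one is the observation that the two ``$\liminf\ge\gamma$'' events depend on a single marginal and are $\P_{x_0}$- respectively $\P_{y_0}$-almost sure, so they persist under any measure absolutely continuous with respect to them.
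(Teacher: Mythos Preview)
Your argument follows the same arc as the paper's proof: contradiction via Proposition~\ref{closed} to produce sets $A,B$ with $\rho(A,B)>0$, Proposition~\ref{psi} to obtain ``good'' initial sets $G_A,G_B$, then a point $(x_0,y_0)\in M\cap(G_A\times G_B)$ whose generalized coupling is forced to violate \eqref{formula}. The order of choices and the final contradiction via $\1_{[0,\varepsilon]}(\rho(\xi_i,\eta_i))\le 1-\1_{A\times B}(\xi_i,\eta_i)$ are identical.

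There is one genuine gap in step~(i). You write $M=\bigcup_k\{(x,y)\in M:\alpha_{x,y}\ge 1/k\}$ and invoke countable exhaustion to extract a \emph{measurable} subset of positive product measure on which $\alpha_{x,y}\ge\alpha_0$. But the hypothesis places no measurability assumption on $(x,y)\mapsto\alpha_{x,y}$, so the level sets $\{\alpha_{x,y}\ge 1/k\}$ need not lie in $\EE\otimes\EE$, and countable subadditivity of $\mu_1\otimes\mu_2$ does not apply. The paper avoids this by arguing instead that there exist $\delta,\bar\delta>0$ such that \emph{every} measurable $\bar M\subset M$ with $\mu_1\otimes\mu_2(\bar M)\ge\mu_1\otimes\mu_2(M)-\delta$ contains a point with $\alpha_{x,y}>\bar\delta$ (otherwise one builds a $\limsup$ of measurable sets of full measure in $M$ on which $\alpha_{x,y}=0$, contradicting $\alpha_{x,y}>0$). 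One then fixes $\gamma$ from $\bar\delta$, chooses $\kappa$ (your $\varepsilon'$) so that $\mu_1\otimes\mu_2(G_A\times G_B)\ge 1-\delta$, and picks $(x_0,y_0)$ in the measurable set $(G_A\times G_B)\cap M$ with $\alpha_{x_0,y_0}>\bar\delta$. This replaces your step~(i) without needing to shrink $M$, and the rest of your proof goes through unchanged.
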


The following corollary is a simple consequence of Theorem \ref{main} and Proposition \ref{singu}. 
    
    \begin{cor}
Let  $P$ be a Markov kernel on the metric space $(E,d)$ with Borel $\sigma$-field
$\EE:=\B(E)$.      Assume that there exists a set $M \in \EE$ such that $\mu(M)>0$ for every invariant probablity measure $\mu$  and that for
every $x,y \in M$ there exists $\alpha_{x,y}>0$ such that for every $\varepsilon>0$ there exists some $\xi_{x,y}^\varepsilon \in \hat \C(\P_x,\P_y)$ such that
\begin{equation}\label{formula2}
 \xi_{x,y}^\varepsilon\Big( (\xi,\eta)\in E^{\N_0}\times E^{\N_0}:\,\limsup_{n \to \infty} \frac 1n\sum_{i=0}^{n-1}\1_{[0,\varepsilon]}\big(\rho(\xi_i,\eta_i)\ge \alpha_{x,y}\big)>0,
  % \limsup_{n \to \infty} \frac 1n\sum_{i=0}^{n-1} \xi_{x,y}^\varepsilon\big(\rho(X_i,Y_i)\leq \varepsilon\big)\geq   \alpha_{x,y},  
\end{equation}
where either $\rho=d$, or $\rho$ is lower semicontinuous and positive definite and $E$ is Polish,
then there exists at most one invariant probability measure.
   \end{cor}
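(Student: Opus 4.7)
The corollary packages Proposition \ref{singu} and Theorem \ref{main} into a single statement, and my plan is to prove it by contradiction in essentially one step. Assume $P$ admits at least two distinct invariant probability measures. Then Proposition \ref{singu} furnishes two mutually singular ipms $\mu_1 \perp \mu_2$; the goal is to derive a contradiction via Theorem \ref{main}.

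To invoke Theorem \ref{main}, I need a set $M' \in \EE \otimes \EE$ with $\mu_1 \otimes \mu_2(M') > 0$ on which the pointwise coupling condition \eqref{formula} holds. I would take $M' := M \times M$, which lies in $\EE \otimes \EE$ since $M \in \EE$. By the hypothesis of the corollary, $\mu(M) > 0$ for every ipm, so in particular $\mu_1(M), \mu_2(M) > 0$ and hence
$$
\mu_1 \otimes \mu_2(M \times M) \;=\; \mu_1(M)\,\mu_2(M) \;>\; 0.
$$
For every $(x,y) \in M \times M$ both coordinates lie in $M$, so the hypothesis \eqref{formula2} of the corollary supplies, for every $\varepsilon > 0$, a generalized coupling $\xi_{x,y}^\varepsilon \in \hat \C(\P_x,\P_y)$ satisfying exactly \eqref{formula}, in either of the two permitted variants ($\rho = d$, or $\rho$ lower semicontinuous and positive definite with $E$ Polish).

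All hypotheses of Theorem \ref{main} are thus verified, and applying it yields that $\mu_1$ and $\mu_2$ cannot be mutually singular, contradicting our choice. Therefore $P$ admits at most one invariant probability measure. There is no serious obstacle in this argument; the only point worth flagging is the trivial promotion of the one-coordinate set $M \in \EE$ to the product set $M \times M \in \EE \otimes \EE$ needed to match the shape of Theorem \ref{main}'s hypothesis — all the real work has already been carried out in the theorem itself and in Proposition \ref{singu}.
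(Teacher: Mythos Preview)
Your argument is correct and is exactly the intended one: the paper itself does not spell out a proof but simply states that the corollary is a simple consequence of Theorem \ref{main} and Proposition \ref{singu}, which is precisely the combination you carry out via $M' = M \times M$.
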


   \begin{remark}
     Conditions  \eqref{formula} and  \eqref{formula2} are slightly weaker than \cite[(2.5)]{KS18}: condition \eqref{formula} is of the form
     $\P\big(\limsup_{n \to \infty}Z_n\ge \alpha\big)>0$ while (2.5) in \cite{KS18} is of the form $\limsup_{n \to \infty}\E Z_n\ge \alpha$.
     \end{remark}
   
      \begin{proof}[Proof of Theorem \ref{main}]
        Assume that $\mu_1$ and $\mu_2$ are mutually singular invariant probability measures of $P$. Let $C \in \EE$ be a set such that $\mu_1(C)=1$
        and $\mu_2(C)=0$. By Proposition \ref{closed}, there exist closed sets
        $A \subset C$ and $B \subset C^c$ such that $\mu_1(A)>1-\kappa$, $\mu_2(B)>1-\kappa$, and $\rho(A,B)>0$ with $\rho:=d$ if $E$ is not Polish.

Denoting the chain starting at $X_0=x\in E$ by $(X_i^x), i \in \N_0$, we have,  by Proposition \ref{psi}, 
\begin{align}\label{absch}
\begin{split}
  \mu_1\Big(\big\{x \in E:\,\liminf_{n \to \infty} \frac 1n \sum_{i=1}^{n-1} \1_A(X_i^x) &\ge \gamma, \,\P_x\mbox{-a.s.}\big\} \Big) 
                                                                                        >1-\frac \kappa{1-\gamma},\\
  \mu_2\Big(\big\{x \in E:\,\limsup_{n \to \infty} \frac 1n \sum_{i=1}^{n-1} \1_B(X_i^x) &\ge \gamma,\,\P_x\mbox{-a.s.}\big\} \Big) 
                                                                                        >1-\frac \kappa{1- \gamma},
\end{split}                                                                                      
\end{align}
where $\gamma \in (0,1)$.

We now proceed to assign specific values to the variables $\gamma$ and $\kappa$.

Note that there exist some $\delta,\bar \delta >0$ such that for every set $\bar M \in \EE\otimes \EE$, $\bar M \subset M$ such that $\mu_1\otimes \mu_2(\bar M)\geq \mu_1\otimes \mu_2(M)-\delta$
there exists some $(x,y)\in \bar  M$ such that $\alpha_{x,y}>\bar \delta$ (even if $(x,y)\mapsto \alpha_{x,y}$ is non-measurable). Fix such $\delta,\bar \delta >0$ and fix
$\gamma \in (0,1)$ such that
$$
2(1-\gamma)<\bar \delta.
$$
Define
\begin{align*}
  E_1:=\Big\{x \in E: &\,\liminf_{n \to \infty} \frac 1n \sum_{i=1}^{n-1} \1_A(X_i^x) \ge \gamma,\, \P_x\mbox{-a.s.}\Big\},\\
  E_2:=\Big\{y \in E: &\,\limsup_{n \to \infty} \frac 1n \sum_{i=1}^{n-1} \1_B(X_i^y)\ge  \gamma,\, \P_y\mbox{-a.s.}\Big\}.
\end{align*}
The sets $E_1$ and $E_2$ still depend on $\kappa$ via $A$ and $B$. Using \eqref{absch}, we can find (and fix) $\kappa>0$  such that $\mu_1\otimes \mu_2\big(E_1\times E_2\big) =\mu_1\big(E_1\big)\mu_2\big(E_2\big)\ge 1-\delta$, so $\bar M:=\big(E_1 \times E_2\big)\cap M$ satisfies $\mu_1\otimes \mu_2(\bar M)\geq \mu_1\otimes \mu_2(M)-\delta$.
Therefore, there exists $(x,y)\in \big(E_1 \times E_2\big)\cap M$ such that $\alpha_{x,y}> \bar \delta$. Fix such a pair $(x,y)$ and let $\varepsilon:=\frac 12 \rho(A,B)$.
Pick $\xi_{x,y}^\varepsilon\in \hat \C\big(\P_x,\P_y\big)$ as in the theorem. If $\big(\hat X_i,\hat Y_i\big)_{i \in \N_0}$ has law $\xi_{x,y}^\varepsilon$, then  $\L (\hat X)\ll \P_x$ and
$\L (\hat Y)\ll \P_y$ and so
$$
\liminf_{n \to \infty}\frac 1n \sum_{i=1}^{n-1} \1_A(\hat X_i) \ge \gamma,\, \liminf_{n \to \infty}\frac 1n \sum_{i=1}^{n-1} \1_B(\hat Y_i) \ge \gamma, \mbox{ a.s.}.
$$
%$$
%and therefore
%$$
%\liminf_{n \to \infty}\frac 1n \sum_{i=1}^{n-1} \1_A(\hat X_i) \ge 2\tilde \gamma -1,\, \mbox{ a.s.}.
%$$
%Similarly,
%$$
%\liminf_{n \to \infty}\frac 1n \sum_{i=1}^{n-1} \1_B(\hat Y_i) \ge 1-2\hat \gamma,\, \mbox{ a.s.}.
%$$
Therefore,
\begin{equation}\label{fastfertig}
\liminf_{n \to \infty}\frac 1n \sum_{i=1}^{n-1} \1_{A \times B}(\hat X_i,\hat Y_i) \ge 2\gamma -1>1-\bar \delta > 1-\alpha_{x,y}\mbox{ a.s.}. 
\end{equation}
Since
$$
\1_{[0,\varepsilon]}\big(\rho\big(\hat X_i,\hat Y_i\big)\big)\le 1- \1_{A \times B}\big(\hat X_i,\hat Y_i\big),
$$
we see that \eqref{fastfertig} contradicts assumption \eqref{formula},  so there cannot exist two mutually singular invariant probability measures.
\end{proof}

     \section{A counterexample}
     The basic set-up of the following example is inspired by \cite[Example 1]{BPR15} in which the authors show that the ``gluing lemma'' need not hold on a separable and
     metrizable space. Our example shows that even if there exists a continuous and positive definite function $\rho:E\times E$, where $E$ is separable and metric,
     such that for every pair $x,y\in E$ there exists a (true) coupling $(X_n,Y_n)$
     for which $\rho(X_n,Y_n)$ converges to 0 almost surely, uniqueness of an invariant probability measure may not hold. 
    \begin{ex}
      Let $I \subset [0,1]$ be a set such that $\lambda^*(I)=1$ and $\lambda_*(I)=0$, where $\lambda$ denotes Lebesgue measure on the Borel sets of $[0,1]$ and $\lambda^*$ and
      $\lambda_*$ are the corresponding outer and inner measures. Further, 
      let $J:=[0,1]\backslash I$ (then  $\lambda^*(J)=1$ and $\lambda_*(J)=0$). Let $E$ be the disjoint union of $I$ and $J$,
      i.e.~$E=E_1\cup E_2$, where $E_1:=\{(x,1):\,x \in I\}$ and $E_2:=\{(x,2):\,x \in J\}$ equipped with the metric
      $$
      d(x,y)=\left\{
        \begin{array}{ll}
          |x-y|& \mbox{ if } (x,y)\in E_1\times E_1 \mbox{ or } (x,y) \in E_2\times E_2,\\
          1&\mbox{ if } (x,y)\in E_1\times E_2 \mbox{ or } (x,y) \in E_2\times E_1.
        \end{array}
        \right. 
        $$
        Note that $E$ is separable (but not Polish since otherwise the following construction could not work). 
        We define $\rho:E \times E\to [0,1]$ as $\rho\big((x,i),(y,j)\big)=|x-y|$ for  $(x,i) \in E_i$, $(y,j) \in E_j$, $i,j \in \{1,2\}$.
        Obviously, $\rho$ is continuous. Further, $\rho$ is positive definite since $\rho\big((x,i),(y,j)\big)=0$ implies that $i=j$ and hence
        either both $x$ and $y$ are in $I$ or both $x$ and $y$ are in $J$ (since $I$ and $J$ are disjoint). In fact, $\rho$ is a (continuous) metric on $E$ which makes
        $(E,\rho)$ a Polish space (which is isometric to the interval $[0,1]$ equipped with the Euclidean metric). Note that the topology generated by $\rho$ is different
        from the one generated by $d$. 

        Next, we construct an $E$-valued Markov chain with two different invariant measures $\mu$ and $\nu$ and a coupling $(X_n,Y_n)$ of two
        copies of the chain starting at $(x,y)$ such that $\lim_{n \to \infty}\rho(X_n, Y_n)=0$ almost surely.

        For $A \subset E$, we define $\pi_1(A):=\{x \in I:\,(x,1)\in A\}$ and $\pi_1(A):=\{x \in J:\,(x,2)\in A\}$. Let
        $$
        \mu(A):=\lambda^*\big(\pi_1(A)\big),\;\nu(A):=\lambda^*\big(\pi_2(A)\big),\;A \in \B (E).
        $$
        We define the Markov kernel $P$ on $E$ by
        $$
        P(x,.)=\left\{
          \begin{array}{ll}
            \mu,& \mbox{if } x \in E_1\\
            \nu,& \mbox{if } x \in E_2.
          \end{array}
        \right.
        $$
        Clearly, $\mu$ and $\nu$ are mutually singular invariant probability measures  of $P$. Note that conditional on $X_0=x\in E_1$ (resp.~$E_2$)
        the sequence $X_1,X_2,...$ is i.i.d.~with law $\mu$ (resp.~$\nu$).

        We define $\xi_{x,y}\in \C\big(\P_x,\P_y\big)$ as follows. If $x,y$ are both in $E_1$, then we let $X_1,X_2,...$ be i.i.d.~with law $\mu$ and $Y_i:=X_i$, $i \in \N$ and similarly if
        $x,y$ are both in $E_2$. This defines a coupling $\xi_{x,y}\in \C\big(\P_x,\P_y\big)$ which satisfies $\lim_{n \to \infty}\rho(X_n,Y_n)=0$.

        Now we assume that $x\in E_1$ and $y \in E_2$. We let $(X_1,Y_1),\,(X_2,Y_2),...$ be independent with a distribution depending on $n \in \N$ as follows.
        For given $n \in \N$, we consider a random variable $U$ which is uniformly distributed on $\{0,...n-1\}$. Let $X_n$ and $Y_n$ be conditionally independent given $U$ with law
$$
\P\big( X_n \in A,Y_n\in B|U=i\big)=n^2\lambda^*\Big(\pi_1(A)\cap\Big[ \frac in,\frac {i+1}n\Big)\Big)\cdot \lambda^*\Big(\pi_2(A)\cap\Big[ \frac in,\frac {i+1}n\Big)\Big),\quad A,B \in \B(E).
$$
Clearly, this defines a coupling of $\P_x$ and $\P_y$ for which $\rho(X_n,Y_n)\le \frac 1n$ almost surely. 
      \end{ex}

      \begin{remark} Note that the Markov kernel $P$ in the previous example is even {\em strong Feller}, i.e.~the map $x \mapsto \int f(y)\,P(x,\dd y)$ is continuous for every
        bounded measurable function $f:E \to \R$. 
        \end{remark}


\begin{thebibliography}{10}%
  \bibliographystyle{abbrv}

\bibitem{BPR15}
  P.~Berti, L.~Pratelli, and P.~Rigo, 
  Gluing lemmas and Skorohod representations,
  \emph{Electronic Comm.~Probab.} 20 (2015) 1-11.


\bibitem{BF20}
  H.~Bessaih and B.~Ferrario,
  Invariant measures for stochastic damped 2D Euler equations,
  \emph{Comm.~Math.~Phys.} 377 (2020) 531–549. 

  
%\bibitem{BKS19}
%  O.~Butkovsky, A.~Kulik, and M.~Scheutzow, 
%  Generalized couplings and ergodic rates for SPDEs and other Markov models, 
%  \emph{Ann.~Appl.~Probab.} 30 (2020) 1-39. 
\bibitem{C13}
  D.~Cohn,
  \emph{Measure Theory}, Second edition, Birkh\"auser, Basel, 2013.
%\bibitem{DZ96}
%  G.~Da Prato and J.~Zabczyk, 
%  \emph{Ergodicity for Infinite Dimensional Systems}, Cambridge Univ.~Press, Cambridge, 1996.  
%
%\bibitem{E11}
%  J.~Elstrodt,
%  \emph{Ma{\ss}-und Integrationstheorie}, 7th edition, Springer, Berlin, 2011.  
%    
%\bibitem{ERS09}
%  A.~Es-Sarhir, M.~v.~Renesse, and M.~Scheutzow, 
%  Harnack inequality for functional SDEs with bounded memory, 
%  \emph{Electronic Comm.~Probab.} 14 (2009) 560-565.
%
%\bibitem{HM11}
%  M.~Hairer and J.~Mattingly, 
%  Yet another look at Harris' ergodic theorem for Markov chains, \emph{Seminar on Stochastic Analysis, Random Fields and Applications VI},
%  Progr.~Probab.~63, Birkhäuser, Basel, 109-117. 
%
\bibitem{H08}
  M.~Hairer, Ergodic Theory for Stochastic PDEs, http://www.hairer.org/notes/Imperial.pdf, 2008.

\bibitem{HMS11}
  M.~Hairer,  J.~Mattingly, and   M.~Scheutzow,
  Asymptotic coupling and a general form of 
Harris' theorem with applications to stochastic delay equations,  
\emph{Probab.~Theory Related Fields}  149 (2011) 223--259. 
%\bibitem{K18}
%  A.~Kulik,
%  \emph{Ergodic Behavior of Markov Processes}, de Gruyter, Berlin, 2018.
%
%\bibitem{KS15}
%  A.~Kulik and M.~Scheutzow, 
%  A coupling approach to Doob's theorem, 
%  \emph{Atti Accad.~Naz.~Lincei Rend.~Lincei Mat.~Appl.} 26 (2015) 83-92. 
%
%
\bibitem{KS15}
  A.~Kulik and M.~Scheutzow, 
  A coupling approach to Doob's theorem, 
  \emph{Atti Accad. Naz. Lincei Rend. Lincei Mat. Appl.} 26 (2015) 83-92.

\bibitem{KS18}
  A.~Kulik and M.~Scheutzow, 
  Generalized couplings and convergence of transition probabilities, 
  \emph{Probab.~Theory Related Fields} 171 (2018) 333-376.

%\bibitem{L92}
%  T.~Lindvall,
%  \emph{Lectures on the Coupling Method}, Wiley, New York, 1992.
% 
%\bibitem{MT09}
%  S.~Meyn and R.~L.~Tweedie,
%  \emph{Markov Chains and Stochastic Stability}, Second edition, Cambridge Univ.~Press, Cambridge, 2009.
%
%\bibitem{O71}
%  S.~Orey,
%  \emph{Lecture Notes on Limit Theorems for Markov Chain Transition Probabilities}, Van Nostrand Reinhold, London, 1971.
%  
%\bibitem{RR04}
%  G.~O.~Roberts and J.~S.~Rosenthal, 
%  General state space Markov chains and MCMC algorithms, 
%  \emph{Probability Surveys} 1 (2004) 20-71.   
%
\bibitem{SS20}
  M.~Scheutzow and D.~Schindler,
Convergence of Markov chain transition probabilities,
  https://arxiv.org/abs/2004.10235.
%\bibitem{V02}
%  A.~Veretennikov,
%  Coupling methods for Markov chains under integral Doeblin type conditions,
%  \emph{Theory Stoch.~Processes} 8 (2002) 383-391.
%
\end{thebibliography}
\end{document}